\newtheorem{Theo}{Theorem}[section]
\newtheorem{Lemm}[Theo]{Lemma}
\newtheorem{Conj}[Theo]{Conjecture}
\newcommand{\IDT}{\mathbb{T}^{\N}}
\newcommand{\N}{\mathbb{N}}
\title{\bf Optimal comparison of $\pmb{P}$-norms of Dirichlet polynomials}
\author{A. Defant}
\address{Institut f\"{u}r Mathematik. Universit\"{a}t Oldenburg. D-26111 Oldenburg (Germany).} \email{defant@mathematik.uni-oldenburg.de}
\author{A. P\'{e}rez}
\address{Departamento de Matem\'{a}ticas, Universidad de Murcia, Espinardo. 30100 Murcia (Spain).} \email{antonio.perez7@um.es}
\thanks{The research of the second author was partially done during a stay in Oldenburg (Germany) under the support of a PhD fellowship of ``La Caixa Foundation'', and of the projects of MINECO/FEDER (MTM2014-57838-C2-1-P) and Fundación S\'{e}neca - Regi\'{o}n de Murcia (19368/PI/14).
}
\subjclass[2010]{30B50, 42AXX, 42BXX}
\keywords{Dirichlet polynomial, prime number, trigonometric polynomial}
\begin{document}
\maketitle

\begin{abstract}
Let $1 \leq p < q <  \infty$. We show that
\[ \sup{\frac{\left\| D\right\|_{\mathcal{H}_{q}}}{\left\| D\right\|_{\mathcal{H}_{p}}}} = \exp{\left( \frac{\log{x}}{\log{\log{x}}} \left(\log{\sqrt{\frac{q}{p}}} + O\left(\frac{\log{\log{\log{x}}}}{\log{\log{x}}}\right)\right) \right)} \,,\]
where the supremum is taken over all non-zero Dirichlet polynomials of the form $D(s)=\sum_{n \leq x}{a_{n} n^{-s}}$. An aplication is given to the study of multipliers between Hardy spaces of Dirichlet series.
\end{abstract}

\section{Introduction}

Let $1 \leq p < \infty$. Given a Dirichlet polynomial $D(s) = \sum_{n}{a_{n}n^{-s}}$, its $p$-norm is defined as
\begin{equation}
\label{equa:pNorm}
\|D\|_{\mathcal{H}_{p}} := \lim_{T \rightarrow \infty}{\left(\frac{1}{2T}\int_{-T}^{T}{|D(it)|^{p} \: dt}\right)^{1/p}}.
\end{equation}
The fact that the previous limit exists, can be argued by means of Bohr's one-to-one correspondence between Dirichlet series and (formal) power series in infinitely many variables \cite{BohrStrip}. Using that every $n \in \N$ has a unique prime number decomposition  $n=\mathfrak{p}^{\alpha} := \mathfrak{p}_{1}^{\alpha_{1}} \mathfrak{p}_{2}^{\alpha_{2}}\ldots$, where $\mathfrak{p} = (\mathfrak{p}_{n})_{n \in \N}$ is the ordered sequence of primes and $\alpha \in \N_{0}^{(\N)}$, the set of eventually null sequences in $\N_{0}:=\N \cup \{ 0\}$.
Following Bohr \cite{BohrStrip} we can identify every Dirichlet series $D = \sum_{n}{a_{n}n^{-s}}$ with the (formal) power series
\[ \mathcal{L}D \equiv \sum_{\alpha \in \N_{0}^{(\N)}}{a_{\mathfrak{p}^{\alpha}} z^{\alpha}}\,, \]
the so-called Bohr lift.
In case $D(s)$ is a Dirichlet polynomial, $\mathcal{L}D$ is then a trigonometric polynomial. And if $d \omega$ denotes the Haar measure on the infinite-dimensional torus $\mathbb{T}^{\N}$, Birkhoff Ergodic Theorem implies that the limit in \eqref{equa:pNorm} exists, being $\| D\|_{\mathcal{H}_{p}} = \| \mathcal{L} D\|_{L_{p}(\mathbb{T}^{\N})}$ (see \cite{Bayart1} for the details). This shows in particular that $\| \cdot \|_{\mathcal{H}_{p}}$ is a norm on the space of Dirichlet polynomials; and moreover, its completion $\mathcal{H}_{p}$ can be seen as a Banach space of Dirichlet series isometric to the Hardy space $H_{p}(\IDT)$ (defined as in \cite{ColeGamelin}) through Bohr's identification.  The systematic study of the Banach spaces $\mathcal{H}_p$
started in \cite{Bayart1} and  \cite{HardySpaceDirich}. Recall that in the setting of almost periodic functions, this type of limit was firstly considered by Besicovitch \cite{Besicovitch1}.

Given $1 \leq p,q < \infty$ we define
\[ \mho(q,p,x) := \sup{\frac{\| D\|_{\mathcal{H}_q}}{\| D\|_{\mathcal{H}_p}}} \hspace{3mm} \mbox{ taken over all $0 \neq D(s) = \sum_{n \leq x}{\frac{a_{n}}{n^{s}}}$}. \]
Along the paper we will always assume that $1 \leq p < q < \infty$, since this is the interesting case.

Let us introduce some notation: given $x > 0$ big enough, we define recursively $\log_{1}{x}:=\log{x}$ and $\log_{k}{x}:=\log_{k-1}{\log{x}}$ for $k>1$. The main result of the paper reads as follows:
\begin{Theo}
\label{Theo:mainEquality} For every $1 \leq p < q < \infty$
\[ \mho(q,p,x) = \exp{\left( \frac{\log{x}}{\log_{2}{x}} \left(\log{\sqrt{\frac{q}{p}}} + O\left(\frac{\log_{3}{x}}{\log_{2}{x}}\right)\right)  \right)}. \]
\end{Theo}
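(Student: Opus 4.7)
Both directions reduce, via the Bohr lift $D \mapsto \mathcal{L}D$, to estimates for trigonometric polynomials on $\mathbb{T}^{\N}$; the arithmetic input is the distribution of $\Omega(n)$, the number of prime factors of $n$ counted with multiplicity.

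\textbf{Lower bound.} I would construct a near-extremal Dirichlet polynomial by choosing $N = N(x) = (1-\delta(x))\log x/\log_2 x$ with $\delta(x) = O(\log_3 x/\log_2 x)$, and $L = \pi(x^{1/N})$, so that $\primes_L^N \leq x$ guarantees that every product of $N$ primes among $\primes_1,\ldots,\primes_L$ is at most $x$. The prime number theorem ensures $L/N \to \infty$. Take the $N$-homogeneous, squarefree Dirichlet polynomial
\[
D(s) := \sum_{\substack{A \subset \{1,\ldots,L\}\\ |A|=N}}\prod_{j \in A}\primes_j^{-s}\,,
\]
whose Bohr lift is the elementary symmetric polynomial $e_N(z_1,\ldots,z_L)$ on $\mathbb{T}^L$. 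Because $L \gg N$, a direct moment computation — in which only pairs of pairwise disjoint subsets $A$ contribute to leading order — identifies $e_N/\sqrt{\binom{L}{N}}$ with a normalised $N$-th order Steinhaus/Wick chaos and yields $\|D\|_{\HH_q}/\|D\|_{\HH_p} = (q/p)^{N/2}(1+o(1))$, which after substituting $N$ gives the stated lower bound.

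\textbf{Upper bound.} Given $D(s) = \sum_{n\leq x}a_n n^{-s}$, I would decompose $D = \sum_{m=0}^{M_0}D^{(m)}$ where $D^{(m)}$ collects the terms with $\Omega(n)=m$, so that the Bohr lift of $D^{(m)}$ is $m$-homogeneous. Bayart's sharp hypercontractive inequality for Steinhaus chaos gives $\|D^{(m)}\|_{\HH_q} \leq (q/p)^{m/2}\|D^{(m)}\|_{\HH_p}$. Set $M := (1+\delta')\log x/\log_2 x$ with $\delta' = O(\log_3 x/\log_2 x)$ and split at $m = M$. For $m \leq M$, the factor $(q/p)^{m/2} \leq (q/p)^{M/2}$ is already of the target size and the $m$-homogeneous projection is bounded on $\HH_p$ (for $p>1$ uniformly in $m$; the $p=1$ case handled by a limiting argument), giving $\sum_{m \leq M}\|D^{(m)}\|_{\HH_q} \lesssim (q/p)^{M/2}\|D\|_{\HH_p}$. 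For $m > M$, the Hardy-Ramanujan-Landau estimate
\[
\pi_m(x) := |\{n \leq x : \Omega(n) = m\}| \leq \frac{x(\log_2 x + C)^{m-1}}{(m-1)!\,\log x}
\]
combined with Cauchy-Schwarz and $L_2$-orthogonality yields $\|D^{(m)}\|_{\HH_q} \leq \|D^{(m)}\|_\infty \leq \sqrt{\pi_m(x)}\,\|D^{(m)}\|_{\HH_2}$; together with the hypercontractive lower bound $\|D^{(m)}\|_{\HH_2} \leq (2/p)^{m/2}\|D^{(m)}\|_{\HH_p}$, a direct computation shows that $\sum_{m>M}\|D^{(m)}\|_{\HH_q}$ decays strictly faster than $(q/p)^{M/2}\|D\|_{\HH_p}$, so the tail is absorbed into the error term.

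\textbf{Main obstacle.} The delicate step is the upper bound. The Bayart hypercontractive constant $\sqrt{q/p}$ (valid at $p=1$, unlike the Rademacher constant $\sqrt{(q-1)/(p-1)}$) is essential; the threshold $M$ must be calibrated so that the hypercontractive and prime-counting estimates are tight simultaneously at the boundary $m \sim \log x /\log_2 x$; and the projection onto the $m$-homogeneous component must be bounded independently of $m$. Reconciling these three competing demands is what forces the sharp error $O(\log_3 x/\log_2 x)$ in the theorem.
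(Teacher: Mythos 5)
Your lower bound is a plausible variant of the paper's construction, but your upper bound has a genuine gap that the paper's decomposition is specifically designed to avoid.

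On the lower bound: the paper uses $Q_n^k = \bigl(n^{-1/2}\sum_{j\leq n} z_j\bigr)^k$ rather than the elementary symmetric polynomial $e_N$. Both converge in distribution (after normalisation) to the same pure chaos $Z^N$, so both should eventually give $(q/p)^{N/2}(1+o(1))$; but $|Q_n^k|^{2r}=|Q_n|^{2kr}$ reduces the whole computation to moments of one scalar sum, for which the paper proves a clean quantitative lower bound (Lemma \ref{Lemm:boundBelow}), while $e_N$ requires a genuinely multilinear moment analysis. You have not supplied the quantitative rate (the analogue of the factor $e^{-4m^2/n}$), so the step ``a direct moment computation yields $(q/p)^{N/2}(1+o(1))$'' is not yet a proof, but the route is salvageable. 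Also, your worry about $p=1$ in the low-$m$ projection is unfounded: the projection onto the $m$-homogeneous component is given by $P_m f(\omega)=\int_{\mathbb{T}}f(z\omega)\bar{z}^m\,dz$ and is a contraction on $L_p(\mathbb{T}^\N)$ for every $p\geq 1$ by Minkowski's integral inequality, no limiting argument needed.

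The real problem is the upper bound. The homogeneous decomposition $D=\sum_m D^{(m)}$ cannot control the tail $m>M$, for two concrete reasons. First, the Hardy--Ramanujan estimate $\pi_m(x)\leq \frac{x(\log_2 x+C)^{m-1}}{(m-1)!\log x}$ is valid for $\Omega$ (prime factors counted with multiplicity) only in the range $m\leq(2-\epsilon)\log_2 x$; past that, smooth integers with many repeated small prime factors (above all high powers of $2$) take over and the quoted bound is simply false. For instance at $m=\lfloor \log x/\log 2\rfloor$ one has $\pi_m(x)\geq 1$ while the quoted formula evaluates to something far smaller than $1$. Since your threshold $M\sim\log x/\log_2 x$ is much larger than $2\log_2 x$, the estimate fails on the entire tail. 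Second, even replacing it by a correct large-$m$ estimate (roughly $\pi_m(x)\asymp x/2^m$ in the very large-$m$ range) one finds $\sqrt{\pi_m(x)}\,(2/p)^{m/2}\approx \sqrt{x}\,p^{-m/2}$, which at $m$ near $\log x/\log 2$ is of size $\approx x^{1/2}$ for $p$ near $1$ — vastly exceeding the target $x^{o(1)}$. So $\sum_{m>M}\|D^{(m)}\|_{\mathcal{H}_q}$ is \emph{not} absorbed by the error term; the Cauchy--Schwarz loss on the coefficients of $D^{(m)}$ is fatal. The paper circumvents this exactly: every $n\leq x$ is factored as $n=jk$ with $j$ $y$-smooth and $k$ $y$-rough, so $D=\sum_{j\in S(x,y)}D_j(s)\,j^{-s}$; the bound $\|D_j\|_{\mathcal{H}_p}\leq\|D\|_{\mathcal{H}_p}$ comes from a \emph{projection} (Jensen/rotation invariance), not from Cauchy--Schwarz on coefficients, hence loses nothing; and since $k$ is $y$-rough one gets the degree bound $\Omega(k)\leq\log x/\log y$ for free, which feeds into hypercontractivity. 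All the dangerous smooth integers are swept into the multiplicative factor $j$ and enter only through the count $|S(x,y)|$, which de Bruijn's theorem controls. That smooth/rough factorisation — not a homogeneous decomposition — is what makes the sharp error term achievable.
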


There already exist inequalities comparing the $p$-norms of certain type of trigonometric polynomials on $\IDT$
\[ P(z) = \sum_{\alpha \in \N_{0}^{(\N)}}{c_{\alpha} z^{\alpha}}. \]
Recall that $P(z)$ is said to be $m$-homogeneous (for some $m \in \N$) if $c_{\alpha} = 0$ whenever $|\alpha| := \alpha_{1} + \alpha_{2} + \ldots \neq m$. Let us denote
\[
H^{q,p}_{m} := \sup{\frac{\| P\|_{L_q(\IDT)}}{\| P\|_{L_{p}(\IDT)}}} \,,
\]
where the supremum is taken over all $m$-homogeneous polynomials $P(z) \neq 0$. Basing on Weissler' result \cite{Weissler} about hypercontractive estimates for the Poisson semigroup, Bayart \cite[Theorem 9]{Bayart1} proved that
\begin{equation}
\label{equa:estimationExponent}
H^{q,p}_{m} \leq \left(\sqrt{\frac{q}{p}}\right)^{m}.
\end{equation}
Recently, it has been shown that the best constant $C > 0$ such that $H^{q,p}_{m} \leq C^{m}$, is precisely $C = \sqrt{q/p}$ (see \cite{DefantMastilo}). We can deduce from the previous estimation that every polynomial $P(z)$ as above satisfies
\begin{equation}
\label{equa:pNormPolynomials1}
 \left\| P \right\|_{L_{q}(\IDT)} \leq H^{q,p}_{\deg{(P)}} \left\| P \right\|_{L_{p}(\IDT)}\,, \mbox{ where } \deg{(P)} := \max{\{ |\alpha| \colon c_{\alpha} \neq 0 \}}.
\end{equation}
Indeed, the rotation invariance of the Haar measure yields that
\begin{equation}
\label{equa:trickhomogeneization}
\| P \|_{L_{r}(\IDT)} = \| \tilde{P} \|_{L_{r}(\mathbb{T} \times \IDT)}\, \mbox{ for each $1 \leq r < \infty$}
\end{equation}
where $\tilde{P}$ is the trigonometric polynomial on $\mathbb{T} \times \IDT \equiv \IDT$ given by
\[ \tilde{P}(z,\omega) = z^{\deg(P)}P(\omega_{1}z^{-1}, \omega_{2}z^{-1}, \ldots)\,, \hspace{3mm} (z,\omega) \in \mathbb{T} \times \IDT. \]
But $\tilde{P}$ is an $m$-homogeneous polynomial with $m = \deg(P)$, so can apply \eqref{equa:estimationExponent} to $\tilde{P}$ and use \eqref{equa:trickhomogeneization} to conclude that \eqref{equa:pNormPolynomials1} holds. Using Bohr's lift, we can reformulate this last inequality in terms of Dirichlet polynomials $D(s)=\sum_{n}{a_{n}n^{-s}}$ as
\begin{equation}
\label{equa:pNormDirichletPolynomials1}
 \left\| D \right\|_{\mathcal{H}_{q}} \leq H^{q,p}_{m} \left\| D \right\|_{\mathcal{H}_{p}}\,, \mbox{ where } m = \max{\{ \Omega(n) \colon a_{n} \neq 0 \}}.
\end{equation}
Recall that $\Omega(n) = \Omega(\mathfrak{p}^{\alpha}) = |\alpha|$ is the function which counts the number of prime divisors of $n$ (with multiplicity). It satisfies $\Omega(n) \leq \log{n}/\log{2}$, which let us deduce that
\[ \mho(q,p,x) \leq \exp{\left( \frac{\log{x}}{\log{2}} \log{\sqrt{\frac{q}{p}}} \right)}. \]
Nevertheless, this upper bound is far from being optimal: A well-known inequality due to Helson \cite{Helson} together with an old estimation of $\max{\{ d(n) \colon n \leq x \}}$ in terms of $x$ due to Wigert \cite{Wigert}, gives that
\[
\begin{split}
\left\| \sum_{n \leq x}{\frac{a_{n}}{n^{s}}} \right\|_{\mathcal{H}_{2}} & \leq \left(\sum_{n \leq x}{\frac{|a_{n}|^{2}}{d(n)}}\right)^{1/2} \sqrt{ \max{\{ d(n) \colon n \leq x \}}}\\
& \leq \left\| \sum_{n \leq x}{\frac{a_{n}}{n^{s}}} \right\|_{\mathcal{H}_{1}} \exp{ \left( \frac{\log{x}}{\log_{2}{x}} \left(\log{\sqrt{2}} + O\left(\frac{\log_{3}{x}}{\log_{2}{x}}\right)\right) \right)}.
\end{split}
\]
This is the upper estimate for the special case $\mho(2,1,x)$  given in Theorem \ref{Theo:mainEquality}, and hence in this case it remains to prove the lower estimate. But in the general case the estimate for $\mho(q,p,x)$ needs a more delicate
argument which is carried out in Section \ref{sec:above}. It relies on a decomposition method inspired by  \cite{KonyaginQueffelec}, in combination with  \eqref{equa:estimationExponent} and a deep
number theoretical result of Bruijn. Section \ref{sec:below} deals with the construction of a suitable family of Dirichlet polynomials to obtain the lower estimate for $\mho(q,p,x)$. We follow an argument based on the Central Limit Theorem, which was used in  \cite{KwapienKonig} to give optimal bounds for the  constants in the Khintchine-Steinhaus inequality. To adapt this idea to our problem, we have to develop a quantitative result concerning the convergence of the $p$-moments for the special sequence of random variables we handle (Lemma \ref{Lemm:boundBelow} and Theorem \ref{Theo:comparingNorms}).

\section{Estimation from above} \label{sec:above}

\noindent Here we prove the upper estimate from Theorem \ref{Theo:mainEquality}:
\begin{equation}
\label{Theo:estimationAbove}
\mho(q,p,x) \leq \exp{\left( \frac{\log{x}}{\log_{2}{x}} \left(\log{\sqrt{\frac{q}{p}}} + O\left(\frac{\log_{3}{x}}{\log_{2}{x}}\right)\right) \right)}
\end{equation}

\begin{proof}
Fix $2 \leq y \leq x$ and denote
\begin{align*}
& S(x,y) = \{ n \leq x \colon \mathfrak{p}_{t}|n \Rightarrow \mathfrak{p}_{t} \leq y \}.
\\
& L(x,y) = \{ n \leq x \colon \mathfrak{p}_{t}|n \Rightarrow \mathfrak{p}_{t} > y \}.
\end{align*}
Let $D(s) = \sum_{n \leq x}{a_{n} n^{-s}}$ be a Dirichlet polynomial. Since each $1 \leq n \leq x$ can be uniquely decomposed as a product $n = j k$ for some $j \in S(x,y)$ and $k \in L(x,y)$, we can write
\begin{equation}
\label{equa:decompostionPrimeDivisors}
D(s) =  \sum_{j \in S(x,y)}{D_{j}(s) j^{-s}}\, \mbox{ where } D_{j}(s) = \sum_{k \in L(x,y)}{a_{j k} k^{-s}}.
\end{equation}
We claim that $\| D_{j}\|_{\mathcal{H}_{p}} \leq \| D\|_{\mathcal{H}_{p}}$ for every $p \geq 1$. To prove it, we will use Bohr's lift and translate the previous elements into trigonometric polynomials. Let $P = \mathcal{L}D$ be the  trigonometric polynomial
\[ P(\omega) = \sum_{\alpha \in \N_{0}^{N}}{a_{\mathfrak{p}^\alpha} \omega^{\alpha}}.\]
 If $\lambda := \pi(y) $, each $\alpha \in \N_{0}^{(\N)}$ has the form $\alpha = (\gamma,\beta)$ where $\gamma \in \N_{0}^{\lambda}$ , $\beta \in \N_{0}^{(\N)}$. Hence, for $\omega = (u,v) \in \mathbb{T}^{\lambda} \times \mathbb{T}^{\N} = \mathbb{T}^{\N}$
\[ P(u,v) = \sum_{\gamma \in \N_{0}^{\lambda}}{P_{\gamma}(v) u^{\gamma}} \hspace{2mm} \mbox{ where } \hspace{2mm} P_{\gamma}(v) = \sum_{\beta \in \N_{0}^{(\N)}}{c_{(\gamma,\beta)} v^{\beta}}.\]
For each $j \in S(x,y)$ we have that $\mathcal{L}D_{j} = P_{\gamma}$ whether $j = \mathfrak{p}^{(\gamma, 0)}$. Hence
\begin{align*}
\begin{split}
\| D_{j}\|^{p}_{\mathcal{H}_{p}} =  \|P_\gamma\|_{L_p(\mathbb{T}^{N-\lambda})}^p
&
= \int_{\mathbb{T}^{N - \lambda}}{\left|\int_{\mathbb{T}^{\lambda}}{P(u,v) u^{-\gamma} du}\right|^{p} dv}
\\&
 \leq \int_{\mathbb{T}^{N-\lambda}}  \int_{\mathbb{T}^{\lambda}}  |P(u,v)|^p du dv  = \|P\|_{L_p(\mathbb{T}^{N})}^p = \| D\|_{\mathcal{H}_{p}}^{p}  \,.
 \end{split}
\end{align*}
This proves the claim. Notice that every $k \in L(x,y)$ satisfies $x \geq k \geq y^{\Omega(k)}$. Combining this inequality with \eqref{equa:pNormDirichletPolynomials1} and \eqref{equa:estimationExponent}, for each $j \in S(x,y)$ we have that
\[ \left\|  D_{j}\right\|_{\mathcal{H}_q} \leq \exp{\left( \frac{\log{x}}{\log{y}}\log{ \sqrt{\frac{q}{p}} }\right)} \, \left\|  D \right\|_{\mathcal{H}_p}.   \]
Applying this to \eqref{equa:decompostionPrimeDivisors}, we get
\begin{equation}
\label{equa:optimizable}
\left\|  D \right\|_{\mathcal{H}_q} \leq \sum_{k \in S(x,y)}{\left\| D_{j} \right\|_{\mathcal{H}_q}} \leq |S(x,y)| \, \exp{\left( \frac{\log{x}}{\log{y}}\log{ \sqrt{\frac{q}{p}} }\right)} \, \| D\|_{\mathcal{H}_p}.
\end{equation}
A deep result due to Bruijn \cite[p. 359, Theorem 2]{Tenenbaum} states that
\begin{equation}
\label{equa:Bruijn}
\log{|S(x,y)|} =  Z\left(1 + O\left(\frac{1}{\log{y}} + \frac{1}{\log_{2}{x}}\right)  \right)
\end{equation}
uniformly for $2 \leq y \leq x$, where
\[ Z =Z(x,y):=\frac{\log{x}}{\log{y}} \log{\left( 1 + \frac{y}{\log{x}} \right)} + \frac{y}{\log{y}} \log{\left( 1 + \frac{\log{x}}{y} \right)}. \]
We choose a proper value of $y$ to minimize the constant in \eqref{equa:optimizable},
\[ y:= \exp{\left( \frac{(\log_{2}{x})^{2}}{\log_{2}{x} + \log_{3}{x}}\right)} = \frac{\log{x}}{\log_{2}{x}} \exp{\left(\frac{(\log_{3}{x})^{2}}{\log_{2}{x} + \log_{3}{x}}\right)}\,, \]
Notice that
\begin{align*}
\frac{y}{\log{y}} = \frac{\log{x}}{\log_{2}{x}} O\left(\frac{1}{\log_{2}{x}}\right) \: \mbox{ and } \: \log{\left( 1 + \frac{\log{x}}{y} \right)} = O(\log_{3}{x}).
\end{align*}
Using that $\log{(1 + t)} \leq t$ for each $t > 0$, we can bound
\[ Z \leq \frac{y}{\log{y}} \left( 1 + \log{\left( 1 + \frac{\log{x}}{y} \right)} \right) \: \mbox{ and so } \: Z = \frac{\log{x}}{\log_{2}{x}} O\left(\frac{\log_{3}{x}}{\log_{2}{x}}\right). \]
Using this estimation in \eqref{equa:Bruijn}, we get that
\begin{equation}
\label{equa:inequalityAux1}
\log{|S(x,y)|} = \frac{\log{x}}{\log_{2}{x}} O\left(\frac{\log_{3}{x}}{\log_{2}{x}}\right).
\end{equation}
On the other hand, for the taken value of $y$
\begin{equation}
\label{equa:inequalityAux2}
\exp{\left( \frac{\log{x}}{\log{y}} \log{\sqrt{\frac{q}{p}}} \right)} = \exp{\left( \frac{\log{x}}{\log_{2}{x}} \left( \log{\sqrt{\frac{q}{p}}} + \frac{\log_{3}{x}}{\log_{2}{x}} \right) \right)}.
\end{equation}
Replacing estimations \eqref{equa:inequalityAux1} and \eqref{equa:inequalityAux2} in \eqref{equa:optimizable}, we conclude the result.
\end{proof}

\section{Estimation from below}
\label{sec:below}

\noindent Along this section we will denote for every $n \in \N$ and $z \in \mathbb{C}^n$
\[ Q_{n}(z) = \frac{1}{\sqrt{n}} \sum_{j=1}^{n}{z_{j}}. \]
A special case of the Khinchine-Steinhaus inequality given in \cite[Theorem 2]{KwapienKonig} states that for every
 $r \geq 1$ and every $n$
\[
\int_{\mathbb{T}^{n}}{\left|Q_{n}(z) \right|^{2r} \: dz}
\leq \Gamma(r + 1)\,.
\]
Let us point out that here the constant on the right side of this inequality is independent of $n$, and even optimal since by the central limit theorem
\begin{equation*}
\lim_{n \rightarrow \infty }\int_{\mathbb{T}^{n}}{\left|Q_{n}(z) \right|^{2r} \: dz} =  \Gamma(r + 1).
\end{equation*}
Hence by Stirling's formula for every  $ r \geq 1$
\begin{equation}
\label{equa:optimalKhintchine}
\int_{\mathbb{T}^{n}}{\left|Q_{n}(z) \right|^{2r} \: dz} \leq  \sqrt{2 \pi r} \left(\frac{r}{e}\right)^{r} e^{\frac{1}{12r}}\,.
\end{equation}
 We will need a similar lower estimate.
\begin{Lemm}
\label{Lemm:boundBelow}
For $m,n \in \mathbb{N}$ with every $n > m+1$ we have
\[ \int_{\mathbb{T}^{n}}{\left| Q_{n}(z)\right|^{2m} \: dz} \geq \sqrt{2 \pi m} \left(\frac{m}{e}\right)^{m} e^{\frac{-4m^{2}}{n}} . \]
\end{Lemm}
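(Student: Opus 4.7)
The plan is to derive the lower bound from the Fourier/combinatorial expansion of $|Q_n(z)|^{2m}$ on $\mathbb{T}^n$, retain only a tractable subsum, and estimate what remains explicitly using Stirling's formula.

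First I would expand $Q_n^m\,\overline{Q_n}^m$ by the multinomial theorem and apply the orthogonality of the characters $z^{\alpha}\bar z^{\beta}$ on $\mathbb{T}^n$, under which only the diagonal terms $\alpha=\beta$ survive. This gives the identity
\[
\int_{\mathbb{T}^n}|Q_n(z)|^{2m}\,dz = \frac{1}{n^m}\sum_{\substack{\alpha\in\N_0^n \\ |\alpha|=m}}\binom{m}{\alpha}^{2}.
\]
Since each summand is non-negative, I would keep only the $\binom{n}{m}$ ``squarefree'' multi-indices $\alpha\in\{0,1\}^n$ with $|\alpha|=m$; for each of these $\binom{m}{\alpha}=m!$, yielding
\[
\int_{\mathbb{T}^n}|Q_n(z)|^{2m}\,dz \geq \frac{\binom{n}{m}(m!)^{2}}{n^m} = m!\prod_{k=0}^{m-1}\!\left(1-\frac{k}{n}\right).
\]

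Second, I would bound the product from below. Because $t\mapsto\log(1-t/n)$ is decreasing on $[0,m]$, comparing the sum of its left-endpoint values with the corresponding integral yields
\[
\sum_{k=0}^{m-1}\log\!\left(1-\frac{k}{n}\right)\geq \int_0^m\log\!\left(1-\frac{t}{n}\right)dt = -m - (n-m)\log\!\left(1-\frac{m}{n}\right).
\]
Plugging in the elementary inequality $\log(1-u)\leq -u$ at $u=m/n\in[0,1)$ gives $-(n-m)\log(1-m/n)\geq m(n-m)/n = m-m^2/n$, so the sum is bounded below by $-m^2/n$; therefore
\[
\prod_{k=0}^{m-1}\!\left(1-\frac{k}{n}\right)\geq e^{-m^2/n}\geq e^{-4m^2/n}.
\]

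Finally, Stirling's classical lower bound $m!\geq\sqrt{2\pi m}\,(m/e)^m$, combined with the preceding two displays, yields exactly the inequality of the lemma. The only non-routine step is the control of the product, and its main difficulty is obtaining an estimate that is uniform in the whole range $n>m+1$. My approach sidesteps any case distinction (say between $m$ small and $m$ close to $n$) by exploiting the very generous factor $4$ in the exponent: in fact the same argument produces the sharper bound $e^{-m^2/n}$, but the weaker $e^{-4m^2/n}$ is all that is needed in the sequel.
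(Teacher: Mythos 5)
Your proof is correct, and it follows a genuinely different path from the paper's. The paper starts from the same Fourier identity $\int_{\mathbb{T}^n}|Q_n|^{2m}\,dz = n^{-m}\sum_{|\alpha|=m}\binom{m}{\alpha}^2$ but then applies Cauchy--Schwarz to the full sum, pairing it against the count $\binom{m+n-1}{m}$ of compositions of $m$ into $n$ nonnegative parts; this yields $\int|Q_n|^{2m}\,dz \geq n^m\binom{m+n-1}{m}^{-1}$, and the binomial coefficient is then estimated via a two-sided Stirling bound, followed by the elementary inequalities $(1+1/x)^x\leq e$ and $(1-1/x)^x>e^{-2}$. You instead discard all non-squarefree multi-indices, reducing the sum directly to $\binom{n}{m}(m!)^2/n^m = m!\prod_{k=0}^{m-1}(1-k/n)$, and control the falling-factorial product by an integral comparison. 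Your route avoids Cauchy--Schwarz and the binomial-coefficient estimate entirely, needs only the one-sided Stirling lower bound for $m!$, and in fact produces the sharper exponent $e^{-m^2/n}$ (the paper's chain only gives $e^{-4m^2/n}$, with the slack coming from the crude bound $(1-1/x)^x>e^{-2}$). Both proofs are short, but yours is arguably the more transparent: truncating a sum of nonnegative terms is more elementary than the Cauchy--Schwarz step, and the improved constant comes for free.
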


\begin{proof}
Using the multinomial formula we have that
\[  Q_{n}(z) ^{m} =  \sum_{\alpha \in \N_{0}^{n}, |\alpha| =m}{\frac{m!}{\alpha!} \frac{z^{\alpha}}{\sqrt{n}^{m}}}\,,\]
(as usual we here  write $\alpha!= \prod_j \alpha_{j}!$), hence by integration (and the orthogonality of the monomials on $\mathbb{T}^n$)
\begin{equation*}
\label{equa:integralEquality}
 \int_{\mathbb{T}^{n}}{\left| Q_{n}(z) \right|^{2m} \: dz} = \frac{1}{n^{m}} \sum_{\alpha \in \N_{0}^{n},|\alpha|=m}{\left(\frac{m!}{\alpha!} \right)^{2}}\,.
\end{equation*}
Now we make use of the Cauchy-Schwartz inequality and again the multinomial formula to deduce that
\begin{align*}
\Big(\sum_{\alpha \in \N_{0}^{n},|\alpha|=m}{\Big(\frac{m!}{\alpha!}
 \Big)^{2}}\Big)^{1/2}
\Big(\sum_{\alpha \in \N_{0}^{n}, |\alpha|=m}{1}\Big)^{1/2} \geq \sum_{\alpha \in \N_{0}^{n},|\alpha|=m}{\frac{m!}{\alpha!}}
 = n^{m}\,,
\end{align*}
and since
\[
 \sum_{\alpha \in \N_{0}^{n}, |\alpha|=m}1\,
=\, \binom{m+n-1}{m}\,,
\]
we arrive at
\begin{equation} \label{equa:squaresMultinomialCoeff}
 \int_{\mathbb{T}^{n}}{\left| Q_{n}(z) \right|^{2m} \: dz} \geq n^m \binom{m+n-1}{m}^{-1}\,.
\end{equation}
In order to  be able to  handle the binomial coefficient we need the followig  estimate
\begin{equation}\label{Lemm:EstimationCombinatorialNumber}
\binom{m+n-1}{m} \leq \frac{1}{\sqrt{2\pi}} \, \sqrt{\frac{m +n-1}{(n-1) \, m}} \, \frac{( m+n-1)^{m+n-1}}{(n-1)^{n-1} \, m^m}\,;
\end{equation}
indeed, by Stirling's formula for every $k$
\[ \sqrt{2 \pi k} \, \left( \frac{k}{e} \right)^{k} e^{\frac{1}{12k+1}} < k! < \sqrt{2 \pi k} \, \left( \frac{k}{e} \right)^{k} e^{\frac{1}{12k}} \]
which gives
\begin{equation*}
\begin{split}
\frac{(m+n-1)!}{(n-1)! \: m!} & \leq \frac{\sqrt{2 \pi (m+n-1)} \, \left( \frac{m+n-1}{e} \right)^{m+n-1} \, e^{\frac{1}{12(m+n-1)}} }{\sqrt{2 \pi (n-1)} \, \left( \frac{n-1}{e} \right)^{n-1} \, e^{\frac{1}{12(n-1)+1}} \, \sqrt{2\pi m} \, \left( \frac{m}{e} \right)^{m} \, e ^{\frac{1}{12m+1}} }\,,
\end{split}
\end{equation*}
and consequently \eqref{Lemm:EstimationCombinatorialNumber}. We combine now  \eqref{equa:squaresMultinomialCoeff} and
\eqref{Lemm:EstimationCombinatorialNumber} to obtain
\[
\begin{split}
\int_{\mathbb{T}^{n}}{\left| Q_{n}(z) \right|^{2m} \: dz} & \geq \sqrt{2 \pi} \,n^{m} \sqrt{\frac{m(n-1)}{m+n-1}} \frac{   (n-1)^{n-1}m^{m}}{(m+n-1)^{m+n-1}}\\
& = \sqrt{2 \pi m} \frac{m^{m}}{\left( 1 +\frac{m}{n-1} \right)^{n-1}} \sqrt{\frac{n-1}{m+n-1}} \left(\frac{n}{m+n-1}\right)^{m}  \\
& \geq \sqrt{2 \pi m} \left(\frac{m}{e}\right)^{m} \left(\frac{n-1}{m+n-1}\right)^{m + 1/2}\,;
\end{split}
\]
for the last esimate we  use that $(1+ 1/x)^{x} \leq e$ for $x \geq 1$. To bound the last factor, we use that $(1 - 1/x)^{x} > e^{-2}$ for $x > 2$, so that
\[ \left(\frac{n-1}{m+n-1}\right)^{m + 1/2}  \geq e^{-2 \frac{m(m+1/2)}{m+n-1}} \geq e^{\frac{-4m^{2}}{n}}. \]
This completes the argument.\end{proof}

To simplify the notation, from now on given two functions $f,g$ depending on $p,q$ and probably other variables, we will write $f \gg g$ when $f \geq c \, g$ for some constant $c=c(p,q)$ depending on $p$ and $q$ but independent of the rest of variables.

\begin{Theo}
\label{Theo:comparingNorms}
Let $n,k \in \mathbb{N}$ with $n > [kq/2] + 1 > [kp/2] + 1 > 1$. Then
\[ \frac{\|Q_{n}^{k}\|_{q}}{\|Q_{n}^{k}\|_{p}} \gg k^{\frac{1}{2q} - \frac{1}{2p}} \left(\frac{q}{p}\right)^{k/2} e^{\frac{-q k^{2}}{n}}. \]
\end{Theo}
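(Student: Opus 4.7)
The plan is to reduce the ratio $\|Q_n^k\|_q / \|Q_n^k\|_p$ to a comparison between \emph{even-integer} moments of $Q_n$, for which both Lemma~\ref{Lemm:boundBelow} and the upper bound \eqref{equa:optimalKhintchine} are directly available. Set $m_q := [kq/2]$ and $m_p := \lceil kp/2 \rceil$, so that $2m_q \leq kq$ and $2m_p \geq kp$; by the hypothesis both are positive integers, and the condition $n > m_q + 1$ required by Lemma~\ref{Lemm:boundBelow} is exactly the given $n > [kq/2]+1$. The monotonicity of $L^r$-norms on the probability space $\mathbb{T}^{n}$ then gives
\[
\|Q_n^k\|_q \,=\, \|Q_n\|_{kq}^k \,\geq\, \|Q_n\|_{2m_q}^k \,=\, \Bigl(\int_{\mathbb{T}^n} |Q_n(z)|^{2m_q}\,dz\Bigr)^{k/(2m_q)},
\]
and, symmetrically, $\|Q_n^k\|_p \leq \bigl(\int_{\mathbb{T}^n}|Q_n(z)|^{2m_p}\,dz\bigr)^{k/(2m_p)}$.

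Plugging in the lower bound from Lemma~\ref{Lemm:boundBelow} in the numerator and the upper bound \eqref{equa:optimalKhintchine} in the denominator, and cancelling the factor $e^{-k/2}$ produced by the two $(m/e)^m$ terms, the ratio decomposes as
\[
\frac{\|Q_n^k\|_q}{\|Q_n^k\|_p} \;\geq\; \Bigl(\frac{m_q}{m_p}\Bigr)^{k/2}
\cdot \frac{(2\pi m_q)^{k/(4m_q)}}{(2\pi m_p)^{k/(4m_p)}}
\cdot \exp\Bigl(-\frac{2 k m_q}{n} - \frac{k}{24\,m_p^2}\Bigr).
\]
The first factor carries the main asymptotic: from $m_q \geq (kq-1)/2$ and $m_p \leq (kp+1)/2$ one has $m_q/m_p \geq (q/p)(1-1/(kq))/(1+1/(kp))$, and the elementary bounds $(1-1/(kq))^{k/2}\geq e^{-1/q}$ and $(1+1/(kp))^{k/2}\leq e^{1/(2p)}$ yield $(m_q/m_p)^{k/2} \gg (q/p)^{k/2}$. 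The final exponential is benign: $2m_q \leq kq$ forces $e^{-2km_q/n} \geq e^{-qk^2/n}$, while $m_p \geq kp/2$ makes $k/(24m_p^2) \leq 1/(6kp^2)$ absolutely bounded.

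The main obstacle is showing that the middle ``correction'' factor contributes precisely $k^{1/(2q)-1/(2p)}$. Writing $m_q = kq/2 + O(1)$ and taking logarithms,
\[
\frac{k}{4m_q}\log(2\pi m_q) = \Bigl(\tfrac{1}{2q}+O(1/k)\Bigr)\bigl(\log k + \log(\pi q) + O(1/k)\bigr) = \tfrac{\log k}{2q} + O(1),
\]
and the analogous expansion gives $(k/(4m_p))\log(2\pi m_p) = (\log k)/(2p) + O(1)$. Subtracting and exponentiating shows that the middle factor is $\gg k^{1/(2q)-1/(2p)}$, with implicit constant depending only on $p,q$. Multiplying the three contributions together produces the claimed lower bound.
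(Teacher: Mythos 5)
Your argument is correct and is essentially the paper's proof: lower bound $\|Q_n^k\|_q$ via Lemma~\ref{Lemm:boundBelow} together with a Jensen/monotonicity step, upper bound $\|Q_n^k\|_p$ via~\eqref{equa:optimalKhintchine}, and then balance the resulting Stirling factors. The only (cosmetic) deviation is that you round $kp/2$ up to the integer $m_p$ before invoking~\eqref{equa:optimalKhintchine}, whereas the paper applies that bound directly at the real value $r=kp/2$; both work, and while the sharp elementary bounds are $m_q>(kq-2)/2$ and $m_p<(kp+2)/2$ rather than the $\pm 1$ you wrote, this is harmless since the conclusion only involves a $p,q$-dependent constant.
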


\begin{proof}
Since $kp \geq 2$ by hypothesis, we can use \eqref{equa:optimalKhintchine} bound
\begin{equation}
\label{equa:boundpnorm}
\|Q_{n}^{k}\|_{p} \leq (\pi k p)^{\frac{1}{2p}} \left( \frac{kp}{2e} \right)^{\frac{k}{2}} e^{\frac{1}{6 k p^{2}}}.
\end{equation}
On the other hand, we want to give a lower bound of $\|Q_{n}^{k}\|_{q}$. Let $m:=[k q/2] \geq 1$. Since $kq \geq 2m$, we can write
\begin{equation}
\label{equa:auxBoundQuotiento}
\|Q_{n}^{k}\|_{q}^{q} = \int_{\IDT}{|Q_{n}(\omega)|^{kq} \: d \omega} \geq \left( \int_{\IDT}{\left| Q_{n}(\omega) \right|^{2m} \: d\omega} \right)^{\frac{k q}{2m}}.
\end{equation}
We can then use the lower bound of Lemma \ref{Lemm:boundBelow} in \eqref{equa:auxBoundQuotiento} to deduce that
\begin{equation}
\label{equa:boundqnorm}
\| Q_{n}^{k}\|_{q} \geq (\sqrt{2 \pi m})^{k/2m} \left(\frac{m}{e}\right)^{k/2} e^{-\frac{2k m}{n}}.
\end{equation}
Combining \eqref{equa:boundqnorm} and \eqref{equa:boundpnorm} we arrive to
\begin{equation}
\label{equa:auxBoundQuotient1}
\frac{\|Q_{n}^{k}\|_{q}}{\|Q_{n}^{k}\|_{p}} \geq (\sqrt{2\pi})^{\frac{k}{2m} - \frac{1}{p}} \cdot \frac{m^{\frac{k}{4m}}}{\left( \frac{kp}{2} \right)^{\frac{1}{2p}}} \cdot \frac{m^{\frac{k}{2}}}{\left(\frac{kp}{2}\right)^{\frac{k}{2}}} \cdot e^{\frac{-1}{6 k p^{2}}} e^{-\frac{2k m}{n}} .
\end{equation}
Using again $k q/2 \geq m \geq kq/2-1$, we get that
\begin{equation}
\label{equa:auxBoundQuotient2}
\frac{m^{\frac{k}{4m}}}{\left( \frac{kp}{2} \right)^{\frac{1}{2p}}} \gg k^{\frac{1}{2q} - \frac{1}{2p}} \cdot \frac{m^{\frac{k}{4m}}}{\left( \frac{k q}{2} \right)^{\frac{1}{2q}}} \geq  k^{\frac{1}{2q} - \frac{1}{2p}} \cdot \frac{m^{\frac{1}{2q}}}{\left( \frac{k q}{2} \right)^{\frac{1}{2q}}} \gg k^{\frac{1}{2q} - \frac{1}{2p}}
\end{equation}
\begin{equation}
\label{equa:auxBoundQuotient3}
\frac{m^{\frac{k}{2}}}{\left(\frac{kp}{2}\right)^{\frac{k}{2}}} \gg \left(\frac{q}{p}\right)^{\frac{k}{2}} \frac{m^{\frac{k}{2}}}{(\frac{kq}{2})^{\frac{k}{2}}} \gg \left(\frac{q}{p}\right)^{\frac{k}{2}}.
\end{equation}
Applying \eqref{equa:auxBoundQuotient2} and \eqref{equa:auxBoundQuotient3} to \eqref{equa:auxBoundQuotient1} we can conclude that
\[ \frac{\|Q_{n}^{k}\|_{q}}{\|Q_{n}^{k}\|_{p}} \gg k^{\frac{1}{2q} - \frac{1}{2p}}  \left(\frac{q}{p}\right)^{\frac{k}{2}}  e^{-\frac{2k m}{n}} \geq  k^{\frac{1}{2q} - \frac{1}{2p}}  \left(\frac{q}{p}\right)^{\frac{k}{2}}  e^{\frac{-q k^{2}}{n}}\,, \]
which is what we wanted.
\end{proof}

The trigonometric polynomial $Q_{n}^{k} = \sum_{\alpha}{c_{\alpha}z^{\alpha}}$ satisfies that $c_{\alpha} \neq 0$ if and only if $\alpha \in \N_{0}^{n}$ with $|\alpha| = k$. Let us fix a real number $x > e^{e^{e}}$ and consider the values
\[ k(x):=\left[\frac{\log{x}}{\log_{2}{x} + \log_{3}{x}}\right] \: \mbox{ and } \: n(x):=\pi(x^{1/k(x)}).\]
The correspondent Dirichlet series via Bohr transform is then of the form
\[ D_{x}(s) = \mathcal{L}^{-1}Q_{n(x)}^{k(x)}= \left(\sum_{i=1}^{n(x)}{\frac{1}{\sqrt{n} \mathfrak{p}_{i}^{s}}}\right)^{k(x)} = \sum_{m \leq x}{a_{m} m^{-s}}. \]

\begin{Theo} For each $1 \leq p < q < \infty$
\[ \frac{\|D_{x}\|_{q}}{\|D_{x}\|_{p}} \geq \exp{\left( \frac{\log{x}}{\log_{2}{x}} \left(\log{\sqrt{\frac{q}{p}}} + O\left(\frac{\log_{3}{x}}{\log_{2}{x}}\right)\right)  \right)}. \]
\end{Theo}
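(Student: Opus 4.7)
The plan is to feed $Q_{n(x)}^{k(x)}$ into Theorem \ref{Theo:comparingNorms} and translate the resulting estimate back to $D_x$ through Bohr's isometry $\|D_x\|_{\mathcal{H}_r} = \|Q_{n(x)}^{k(x)}\|_{L_r(\IDT)}$, valid for every $1 \leq r < \infty$.

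First I would verify the hypothesis of Theorem \ref{Theo:comparingNorms}, namely $n(x) > [k(x)q/2] + 1$. By the Prime Number Theorem together with the identity $\log(x^{1/k(x)}) = \log_2 x + \log_3 x + o(1)$, which follows directly from $k(x) = [\log x/(\log_2 x + \log_3 x)]$, one obtains $x^{1/k(x)} = (\log x)(\log_2 x)(1+o(1))$, and hence
\[ n(x) = \pi\!\left(x^{1/k(x)}\right) = \frac{\log x}{1 + \log_3 x/\log_2 x}\,(1+o(1)). \]
Since $k(x) = O(\log x/\log_2 x)$, the ratio $n(x)/k(x)$ tends to infinity, so the hypothesis is satisfied for all $x$ sufficiently large.

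Next I would apply Theorem \ref{Theo:comparingNorms} and take logarithms, obtaining
\[ \log\frac{\|D_x\|_{\mathcal{H}_q}}{\|D_x\|_{\mathcal{H}_p}} \geq \left(\frac{1}{2q}-\frac{1}{2p}\right)\log k(x) + \frac{k(x)}{2}\log\frac{q}{p} - \frac{qk(x)^2}{n(x)} - O(1), \]
where the additive $O(1)$ absorbs the implicit $p,q$-constant hidden in $\gg$. The first summand is $O(\log_2 x)$. For the middle term, writing $k(x) = \log x/(\log_2 x + \log_3 x) + O(1)$ and expanding $(\log_2 x + \log_3 x)^{-1} = (\log_2 x)^{-1}(1 + O(\log_3 x/\log_2 x))$ produces
\[ \frac{k(x)}{2}\log\frac{q}{p} = \frac{\log x}{\log_2 x}\log\sqrt{\frac{q}{p}} + O\!\left(\frac{\log x\,\log_3 x}{(\log_2 x)^2}\right). \]
Finally, the estimates for $k(x)$ and $n(x)$ yield $qk(x)^2/n(x) = O(\log x/(\log_2 x)^2)$. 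Since $\log_3 x \geq 1$ for $x > e^{e^e}$, both the first and third corrections fit inside $O(\log x \log_3 x/(\log_2 x)^2)$, and assembling everything gives the claimed bound.

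The main technical point is the error accounting in the exponent: each of the three corrections must fit within the target $(\log x/\log_2 x) \cdot O(\log_3 x/\log_2 x)$. This is precisely why $k(x)$ is defined with denominator $\log_2 x + \log_3 x$ rather than the naive $\log_2 x$; that choice forces $n(x) \sim \log x/(1+\log_3 x/\log_2 x)$, which is large enough to render the exponential damping $\exp(-qk^2/n)$ negligible against the target error, while at the same time keeping the main contribution $(k(x)/2)\log(q/p)$ equal to $(\log x/\log_2 x)\log\sqrt{q/p}$ up to exactly the admissible error.
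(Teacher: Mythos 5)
Your proposal is correct and follows essentially the same route as the paper: verify the hypothesis of Theorem \ref{Theo:comparingNorms} via the Prime Number Theorem, establish $k(x)/n(x) = O(1/\log_2 x)$, apply the theorem, take logarithms, and account for the three corrections against the target error $(\log x/\log_2 x)\,O(\log_3 x/\log_2 x)$. The only cosmetic difference is that you invoke the plain asymptotic form of the PNT where the paper cites an explicit bound of Dusart, and you track the three error terms separately where the paper folds two of them into a single function $f(x) = O(1/\log_2 x)$ before multiplying by $k(x)$.
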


\begin{proof}
Using the prime number theorem, and more especifically a bound  due to Dusart \cite[Theorem 1.10]{Dursat}, we have that for $x^{1/k(x)} \ge 599$
\[ \pi(x^{1/k(x)}) \geq \frac{k(x) x^{1/k(x)}}{\log{x}} \left( 1 + \frac{k(x)}{\log{x}}\right). \]
Therefore
\[ \frac{k(x)}{n(x)} \leq \frac{\log{x}}{x^{1/k(x)}} = \exp{\left( \log_{2}{x} - \frac{\log{x}}{k(x)} \right)} \leq \exp{\left( - \log_{3}{x} \right)} = \frac{1}{\log_{2}{x}}. \]
Note that $n(x)/k(x)$ tends to infinity when $x$ does, so for $x$ big enough the hypothesis of Theorem \ref{Theo:comparingNorms} are satisfied. This means that we can bound
\[ \frac{\|D_{x}\|_{q}}{\|D_{x}\|_{p}} \gg k(x)^{\frac{1}{2q} - \frac{1}{2p}} \left(\frac{q}{p}\right)^{k(x)/2} e^{\frac{-k(x)^{2}q}{n(x)}} = \exp{\left( k(x)  \left(\log{\sqrt{\frac{q}{p}}} + f(x)\right) \right)}\]
where
\[ f(x) =  \left(\frac{1}{2q} - \frac{1}{2p} \right) \frac{\log{k(x)}}{k(x)} - q \frac{k(x)}{n(x)} = O\left(\frac{1}{\log_{2}{x}}\right). \]
Finally observe that
\[ k(x) = \frac{\log{x}}{\log_{2}{x}} \left( 1 + O\left( \frac{\log_{3}{x}}{\log_{2}{x}}\right) \right), \]
which completes the proof.
\end{proof}

\section{Application to multipliers}

Recall that a sequence of real numbers $(\lambda_{n})_{n \in \N}$ is said to be a \emph{multiplier} from $\mathcal{H}_{p}$ to $\mathcal{H}_{q}$, if for every Dirichlet series $\sum_{n}{a_{n}n^{-s}}$ in $\mathcal{H}_{p}$ we have that $\sum_{n}{\lambda_{n}a_{n}n^{-s}}$ belongs to $\mathcal{H}_{q}$. In \cite{Bayart1}, Bayart makes use of Weissler result \cite{Weissler} to obtain sufficient conditions for a multiplicative sequence $(\lambda_n)$ (i.e., $\lambda_{n m} = \lambda_{n}  \lambda_{m}$ for all $m,n$) to be a multiplier from $\mathcal{H}_{p}$ to $\mathcal{H}_{q}$. Here we use Theorem \ref{Theo:mainEquality} to give a sufficient condition for a not necessarily multiplicative sequence of positive real numbers to be a multplier.

\begin{Theo}Given $1 \leq p < q < \infty$,
let $(\lambda)_{n \in \N}$ be a decreasing sequence of positive real numbers satisfying
\[ \sum_{n}{\frac{\lambda_{n}}{n \log\log n} \left(\sqrt{\frac{q}{p}} + \varepsilon\right)^{\frac{\log{n}}{\log \log n}}} <  \infty  \hspace{3mm} \mbox{ for some $\varepsilon > 0$}. \]
Then $(\lambda_{n})_{n \ in  \N}$ is a multiplier from  $\mathcal{H}_{p}$ to $\mathcal{H}_{q}$.
\end{Theo}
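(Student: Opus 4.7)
The plan is to decompose the multiplier operator via Abel summation into a weighted superposition of partial-sum operators, each of which can be controlled through Theorem \ref{Theo:mainEquality}. Fix $\varepsilon > 0$ as in the hypothesis. First I would reduce, by density together with a Fatou-type argument on $L_q(\IDT)$ via the Bohr lift, to proving a uniform estimate
\[
\Big\|\sum_{n \le N} \lambda_n a_n n^{-s}\Big\|_{\mathcal{H}_q} \;\ll\; \|D\|_{\mathcal{H}_p}
\]
for every Dirichlet polynomial $D(s)=\sum_{n \le N} a_n n^{-s}$, with the implicit constant depending on $p,q,\varepsilon$ and the convergent series of the hypothesis but not on $N$ or $D$.

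For this polynomial estimate I would apply Abel summation (using that $(\lambda_n)$ is decreasing) to obtain
\[
\sum_{n \le N} \lambda_n a_n n^{-s} \;=\; \lambda_N D \;+\; \sum_{k=1}^{N-1} (\lambda_k - \lambda_{k+1}) D_k, \qquad D_k := \sum_{n \le k} a_n n^{-s}.
\]
Each $D_k$ is a Dirichlet polynomial supported on $\{n \le k\}$, so Theorem \ref{Theo:mainEquality} yields $\|D_k\|_{\mathcal{H}_q} \leq \mho(q,p,k)\|D_k\|_{\mathcal{H}_p}$. Combined with the (uniform in $k$) boundedness of the partial-sum projection $D \mapsto D_k$ in $\mathcal{H}_p$, which is the main technical point discussed below, the triangle inequality reduces the estimate to bounding
\[
\lambda_N \mho(q,p,N) \;+\; \sum_{k=1}^{N-1}(\lambda_k - \lambda_{k+1}) \mho(q,p,k).
\]
A second, reverse Abel summation rewrites this expression as $\lambda_1 \mho(q,p,1) + \sum_{k=2}^N \lambda_k (\mho(q,p,k) - \mho(q,p,k-1))$. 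From the explicit asymptotics in Theorem \ref{Theo:mainEquality} one verifies that the discrete increment satisfies $\mho(q,p,k) - \mho(q,p,k-1) \ll \mho(q,p,k)/(k\log_2 k)$, while the same theorem gives $\mho(q,p,k) \le C_\varepsilon (\sqrt{q/p}+\varepsilon)^{\log k/\log_2 k}$ for all sufficiently large $k$. Plugging these in produces a series dominated by
\[
\sum_k \frac{\lambda_k}{k \log_2 k}\Big(\sqrt{q/p}+\varepsilon\Big)^{\log k/\log_2 k},
\]
which is convergent by hypothesis, closing the argument.

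The principal obstacle is the uniform boundedness of the partial-sum projection $D \mapsto D_k$ on $\mathcal{H}_p$: it is automatic for $p=2$ and follows from standard Riesz-type projection results for $1<p<\infty$, but the case $p=1$ is more delicate and would either require a smoother truncation (for instance Cesaro or Abel means in the Dirichlet variable, which are known to act boundedly on $\mathcal{H}_1$) or an alternative dyadic block decomposition that sidesteps partial sums altogether. Once this projection bound is in hand uniformly in $k$, the remainder of the proof is the two Abel summations above together with the sharp asymptotics of $\mho(q,p,x)$ provided by Theorem \ref{Theo:mainEquality}.
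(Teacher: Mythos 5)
Your overall strategy---two rounds of Abel summation combined with the asymptotics of $\mho(q,p,x)$ from Theorem \ref{Theo:mainEquality}---matches the paper's proof. However, there is a genuine gap in how you handle the partial-sum operators.

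You frame the ``principal obstacle'' as establishing \emph{uniform} (in $k$) boundedness of $D\mapsto D_k=S_kD$ on $\mathcal{H}_p$, claim this follows from Riesz-type projection results when $1<p<\infty$, and for $p=1$ suggest replacing partial sums with Cesàro/Abel means or dyadic blocks. This misidentifies the problem. The projection onto the set $\{n\le k\}$ is not a Riesz projection on $\mathbb{T}^{\N}$---the frequency set $\{\alpha:\mathfrak{p}^{\alpha}\le k\}$ is not a half-space or a rectangle---and the partial-sum operators are in fact \emph{not} uniformly bounded on $\mathcal{H}_p$ for $p\neq 2$. What is true, and what the paper uses, is the weaker bound $\|S_k\|_{\mathcal{H}_p\to\mathcal{H}_p}\le C\log k$, valid for all $p\ge 1$ including $p=1$. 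The key observation you are missing is that this logarithmic growth is harmless: once you replace $\mho(q,p,x)$ by the slightly larger smooth majorant
\[
g(x)=\exp\!\left(\frac{\log x}{\log_2 x}\Big(\log\sqrt{q/p}+\varepsilon\Big)\right),
\]
the factor $C\log x$ is absorbed into the $\varepsilon$-slack (since $\log x = \exp(\log_2 x) = o(g(x)^{\delta})$ for any $\delta>0$), giving the clean bound $\|S_xD\|_{\mathcal{H}_q}\le g(x)\|D\|_{\mathcal{H}_p}$ for $x$ large, uniformly in $D$. No smoother truncation or dyadic decomposition is needed; they would also break the exact Abel-summation identity you rely on. A secondary weakness: you estimate the increment $\mho(q,p,k)-\mho(q,p,k-1)$ directly, but $\mho$ is only known up to the stated asymptotics and need not be monotone or smooth; the paper sidesteps this by differentiating the majorant $g$ to get $g(n+1)-g(n)\le A\,g(n)/(n\log_2 n)$, which then feeds into exactly the series in the hypothesis. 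With these two repairs---using the $\log$-growth bound for $S_k$ plus $\varepsilon$-absorption, and working with $g$ rather than $\mho$ in the second Abel summation---your argument becomes the paper's proof.
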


\begin{proof}
Let us denote
\[
g(x):=\exp{\left( \frac{\log{x}}{\log_{2}{x}} A \right)} \, \mbox{ where } \, A :=\log{\sqrt{\frac{q}{p}}} + \varepsilon.
\]
Recall that there exists $C > 0$ such that for every $x>1$, the  partial sum operator $S_{x}(\sum_{n}{a_{n} n^{-s}}) = \sum_{n \leq x}{a_{n}n^{-s}}$ has norm $\| S_{x}\|_{\mathcal{H}_{p} \rightarrow \mathcal{H}_{p}} \leq C \log{x}$. Given $D = \sum_{n}{a_{n}n^{-s}}$ in $\mathcal{H}_{p}$, we then have
\[
 \left\| \sum_{n \leq x}{\frac{a_{n}}{n^{s}}} \right\|_{\mathcal{H}_q} \, \leq \mho(q,p,x) \, \left\| \sum_{n \leq x}{\frac{a_{n}}{n^{s}}} \right\|_{\mathcal{H}_p} \leq \mho(q,p,x) \, C \, \log{x}  \| D \|_{\mathcal{H}_{p}}.
 \]
By Theorem \ref{Theo:mainEquality}, we deduce that when $x$ is big enough
\begin{equation}
\label{equa:inequalityPartialSUms}
  \left\| \sum_{n \leq x}{\frac{a_{n}}{n^{s}}} \right\|_{\mathcal{H}_q} \, \leq g(x) \, \| D \|_{\mathcal{H}_{p}}.
\end{equation}
Moreover, also if $x$ tends to infinity we have that
\[ 0 \leq g'(x) \leq \frac{A \, g(x)}{x \log_{2}{x}} \: \mbox{ and so } \: \frac{d}{dx}{\left(\frac{ \, g(x)}{x \log_{2}{x}} \right)} \leq g(x) \left( A - \log_{2}{x} \right) < 0. \]
This means that for $n$ big enough,
\begin{equation}
\label{equa:boundDerivative}
g(n+1) - g(n) = \int_{n}^{n+1}{g'(x) \: dx}  \leq \frac{A \, g(n)}{n \, \log_{2}{n}}.
\end{equation}
Let $0 < m < M$ be natural numbers. Using Abel's summation formula
\[ \sum_{n=m}^{M}{\frac{\lambda_{n}a_{n}}{n^{s}}} = \sum_{n=m}^{M-1}{\left( \sum_{k=1}^{n}{\frac{a_{k}}{k^{s}}} \right) \left( \lambda_{n} - \lambda_{n+1} \right)} - \left( \sum_{k=1}^{m-1}{\frac{a_{k}}{k^{s}}} \right) \lambda_{m} + \left( \sum_{k=1}^{M}{\frac{a_{k}}{k^{s}}} \right) \lambda_{M}.\]
Therefore, taking $m$ big enough and using  \eqref{equa:inequalityPartialSUms} and \eqref{equa:boundDerivative}
\begin{align*}
\left\| \sum_{n=m}^{M}{\frac{\lambda_{n}a_{n}}{n^{s}}} \right\|_{q} & \leq \| D\|_{p} \left(  \sum_{n=m}^{M-1}{g(n) \left( \lambda_{n} - \lambda_{n+1} \right)} + g(m-1) \lambda_{m} + g(M) \lambda_{M}\right)\\
& \leq \|D\|_{p} \left( 2\lambda_{m}g(m) + \sum_{n=m}^{M-1}{\lambda_{n}(g(n+1) - g(n))}  \right)\\
& \leq \|D\|_{p} \left( 2 \lambda_{m} g(m) + A \sum_{n=m}^{M-1}{\frac{\lambda_{n}g(n)}{n \log_{2}{n}}} \right).
\end{align*}
The series $\sum_{n}{\frac{\lambda_{n}g(n)}{n \log_{2}{n}}}$ converges by hypothesis. On the other hand, it also follows from this fact that there is an increasing sequence $(N_{k})_{k \in \N}$ of natural numbers such that $\lim_{k}{\lambda_{N_{k}} g(N_{k})} = 0$. Hence, the inequality above leads to the existence of a subsequence of the partial sums of $\sum_{n}{\lambda_{n}a_{n} n^{-s}}$ converging in $\mathcal{H}_{q}$, which in particular means that $\sum_{n}{\lambda_{n}a_{n} n^{-s}} \in \mathcal{H}_{q}$.
\end{proof}

\section{Remarks}

One of the main tools in the proof of Theorem \ref{Theo:mainEquality} has been \eqref{equa:estimationExponent}. This estimation is also valid when we deal with $m$-homogenous polynomials with coefficients in an arbitrary (complex) Banach space (see \cite{PolynomialCotype}). This means that the argument in the proof of Theorem \ref{Theo:estimationAbove} also works for Dirichlet polynomials with coefficients in some complex Banach space.

Although probably without leading to a better estimate in Theorem \ref{Theo:mainEquality}, we strongly believe that the inequality from  \eqref{equa:estimationExponent} can be improved in the following way:
\begin{Conj}
For every $1 \leq p < q < \infty$ and $m \in \N$ we have that
\[ H^{q,p}_{m} \leq \, m^{\frac{1}{2q} - \frac{1}{2p}} \left(\sqrt{\frac{q}{p}}\right)^{m}.\]
\end{Conj}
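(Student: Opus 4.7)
The conjectured bound $m^{1/(2q)-1/(2p)}(\sqrt{q/p})^{m}$ refines Bayart's exponential rate $(\sqrt{q/p})^{m}$ by a sub-exponential polynomial factor. My plan rests on the observation that this exact factor appears in the Gaussian moment comparison: if $g$ is a standard complex Gaussian, then by Stirling's formula $\|g^{m}\|_{q}/\|g^{m}\|_{p}$ is asymptotically comparable to $c(p,q)\, m^{1/(2q)-1/(2p)}(\sqrt{q/p})^{m}$, and (as Theorem \ref{Theo:comparingNorms} already witnesses in one direction) the polynomials $Q_{n}^{m}$ realize exactly this ratio in the limit $n\to\infty$. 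So the conjecture amounts to saying that Gaussian chaos is \emph{extremal for every fixed $m$}, not merely asymptotically.

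The first step is to reduce to a Gaussian-type statement. Given an $m$-homogeneous $P$ on $\mathbb{T}^{N}$, form the polarization $L(z^{(1)},\ldots,z^{(m)})$, the unique symmetric $m$-linear form with $L(z,\ldots,z)=P(z)$. By the trick in \eqref{equa:trickhomogeneization} and an iterated application of rotation invariance, the ratio $\|P\|_{L_{q}}/\|P\|_{L_{p}}$ is controlled by the analogous ratio for $L$ on $(\mathbb{T}^{N})^{m}$, where one has access to decoupling inequalities. The hope is that after decoupling, the $L_{q}/L_{p}$ ratio of $L$ can be compared with a Gaussian multilinear form, and one harvests the factor $m^{1/(2q)-1/(2p)}$ from a sharp Khintchine-Steinhaus-type inequality with $m$-dependent constants analogous to the ones behind \eqref{equa:optimalKhintchine}.

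The second step is to produce a \emph{refined hypercontractive inequality} capturing this slack. Bayart's argument uses Weissler's sharp contractivity of $f\mapsto f(rz)$, $L_{p}(\mathbb{T})\to L_{q}(\mathbb{T})$, at $r=\sqrt{p/q}$, which is sharp on the one-variable monomial $z^{m}$; consequently no improvement can come from the one-variable analysis. My plan is to establish, for $m$-homogeneous polynomials on $\mathbb{T}^{N}$ with $N\ge 2$, a strictly hypercontractive bound
\[
\|P_{t}P\|_{L_{q}(\mathbb{T}^{N})} \leq \kappa_{m}(p,q)\, e^{-tm}\|P\|_{L_{p}(\mathbb{T}^{N})}
\]
at the critical $e^{-t}=\sqrt{p/q}$, with $\kappa_{m}(p,q)\ll m^{1/(2q)-1/(2p)}$. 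This would follow if one can interpolate between the Bayart bound at $m=1$ and the Gaussian limit at $m\to\infty$ in a dimension-free fashion, e.g.\ via a complex interpolation argument on the symmetric tensor powers of $L_{p}(\mathbb{T})$ equipped with Haagerup-type norms.

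The main obstacle is Step 2. The correction $m^{1/(2q)-1/(2p)}$ is smaller than anything produced by the semigroup methods currently available: Bayart's argument is sharp at each $m$ in the sense that no improvement is detected by the operator norm of $P_{t}$ on the full space of $m$-homogeneous polynomials. To extract the polynomial correction one needs a quantitative CLT that dominates \emph{all} $m$-homogeneous polynomials by the Gaussian extremizer $g^{m}$ (the mirror image of Lemma \ref{Lemm:boundBelow} and Theorem \ref{Theo:comparingNorms}, which handle only $Q_{n}^{k}$). I do not see how to obtain such a domination without a genuinely new hypercontractive ingredient, which is presumably why the authors leave the statement as a conjecture; my proposal is therefore best read as a program rather than a proof, with the critical gap being a sharp comparison between the $L_{p}$-geometry of $m$-homogeneous polynomials on $\mathbb{T}^{N}$ and that of Gaussian chaos of order $m$.
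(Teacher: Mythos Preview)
Your proposal is, by your own admission, a program rather than a proof, and the gap you identify---extracting the polynomial correction $m^{1/(2q)-1/(2p)}$ from a hypercontractive estimate that is already sharp at the operator level---is genuine and unresolved. The paper does not prove the conjecture in general either; it is stated as a conjecture precisely because no proof is known for arbitrary $1\le p<q<\infty$.

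What the paper \emph{does} provide, and what your proposal misses entirely, is a short elementary argument for the case $p=2$, $q=4$ (with the claim that the same method handles $p<q$ both powers of two). That argument has nothing to do with Gaussian extremality, decoupling, or semigroups. One simply expands
\[
\|P\|_{4}^{4}=\sum_{|\gamma|=2m}\Bigl|\sum_{|\alpha|=m,\ \alpha\le\gamma}c_{\alpha}c_{\gamma-\alpha}\Bigr|^{2}
\le \sum_{|\gamma|=2m}\kappa(\gamma,m)\sum_{|\alpha|=m,\ \alpha\le\gamma}|c_{\alpha}|^{2}|c_{\gamma-\alpha}|^{2},
\]
where $\kappa(\gamma,m)=|\{\alpha:|\alpha|=m,\ \alpha\le\gamma\}|$, and observes that $\kappa(\gamma,m)\le\binom{2m}{m}\le 4^{m}/\sqrt{\pi m}$. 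Comparing with the analogous expansion of $\|P\|_{2}^{4}$ gives $\|P\|_{4}\le (\pi m)^{-1/8}\sqrt{2}^{\,m}\|P\|_{2}$, which is exactly the conjectured bound (with a slightly better constant). The entire polynomial correction comes from the Stirling estimate on the central binomial coefficient.

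So the comparison is this: the paper's approach is concrete and complete for $q/p$ an even integer, but has no obvious extension to general exponents because it relies on writing $\|P\|_{q}^{q}$ as a finite combinatorial sum. Your approach aims at all $p,q$ simultaneously via a Gaussian-extremality principle, which is the right heuristic for why the conjecture should hold, but at present it proves nothing---not even the $p=2$, $q=4$ case that the paper dispatches in a paragraph. If you want to turn your program into progress, a natural first test would be to recover the paper's $p=2$, $q=4$ bound from your framework; if the refined hypercontractive inequality you posit cannot reproduce that case, the program needs rethinking.
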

Indeed, for the case in which $p<q$ are powers of two, we can use elementary methods to show that this conjecture is true. We sketch here the proof of the case $p=2$ and $q=4$:

Let $P = \sum_{|\alpha| =m}{c_{\alpha} \omega^{\alpha}}$ be an $m$-homogeneous polynomial. Given $\alpha, \gamma \in \N_{0}^{(\N)}$ we write $\alpha \leq \gamma$ whenever $\alpha _{n} \leq \gamma_{n}$ for each $n \in \N$. We this notation
\[ \| P\|_{2}^{4} = \left( \sum_{|\alpha| = m}{|c_{\alpha}|^{2}} \right)^{2} = \sum_{|\gamma|=m}{\left( \sum_{|\alpha|=m, \alpha \leq \gamma}{|c_{\alpha}|^{2}| |c_{\gamma - \alpha}|^{2}} \right)}. \]
\[
\|P\|_{4}^{4} = \sum_{|\gamma| = 2m}{\left| \sum_{|\alpha|=m}{c_{\alpha}c_{\gamma - \alpha}} \right|^{2}}
\leq \sum_{|\gamma|=2m}{\left( \sum_{\substack{|\alpha|=m \\ \alpha \leq \gamma}}{|c_{\alpha}|^{2} |c_{\gamma - \alpha}|^{2}} \right) \kappa(\gamma,m)}.
\]
where $\kappa(\gamma,m) = |\{ \alpha \colon |\alpha|=m, \alpha \leq \gamma \}|$. Among all $\gamma \in \N_{0}^{(\N)}$ with $|\gamma| = 2m$, we have that the maximum value of $\kappa(\gamma,m)$ is attained whenever the entries of $\gamma$ are all either one or zero. In this case, we can calculate explicitely $\kappa(\gamma,m)$ in terms of a combinatorial number that can be estimated by means of Lemma \ref{Lemm:EstimationCombinatorialNumber} as
\[ \kappa(\gamma,m) \leq \binom{2m}{m} \leq \frac{4^{m}}{\sqrt{\pi m}}. \]
We then conclude that
\[ \| P\|_{4}^{4} \leq \binom{2m}{m} \leq \frac{4^{m}}{\sqrt{\pi m}} \| P\|_{2}^{4} \]
which gives the desired result.


\begin{thebibliography}{99}

\bibitem{Bayart1} { \sc Bayart, F.},
{\it Hardy spaces of Dirichlet series and their Composition Operators}. Monatshefte f\"{u}r Mathematik, 136(3), 203-236 (2002).

%\bibitem{BayartComposition}  {\sc Bayart, F.,  Queff\'{e}lec, H. and Seip, K.}
%{\it Approximation numbers of composition operators on Hp spaces of Dirichlet series} http://arxiv.org/abs/1406.0445 (2015)

%\bibitem{BayartMatheron} {\sc Bayart, F., and Matheron, \'{E}.},
%{\it Dynamics of linear operators.} (Vol. 179). Cambridge university press (2009).

\bibitem{Besicovitch1} {\sc Besicovitch, A. S.}
{\it On generalized almost periodic functions.} Proceedings of the London Mathematical Society, 2(1), 495-512 (1926).

%\bibitem{BohnemblustHille} {\sc Bohnenblust, H.F. and Hille, E.},
%{\it On the absolute convergence of Dirichlet series}. Ann. Math. 32(2), 600–622 (1934)

\bibitem{BohrStrip} {\sc Bohr, H.},
{\it \"{U}ber die Bedeutung der Potenzreihen unendlich vieler Variabeln in der Theorie der Dirichletschen Reihen $\sum{\frac{a_{n}}{n^{s}}}$}. Nachr. Ges. Wiss. G\"{o}tt. Math. Phys. Kl. 4, 441–488 (1913).

\bibitem{PolynomialCotype} {\sc Carando, D., Defant, A. and Sevilla-Peris, P.}
{\it Some polynomial versions of cotype and applications} http://arxiv.org/abs/1503.00850 (2015).

\bibitem{ColeGamelin} {\sc Cole, B. J., and Gamelin, T. W.}
{\it Representing measures and Hardy spaces for the infinite polydisk algebra}.
Proceedings of the London Mathematical Society, 3(1), 112-142 (1986).

\bibitem{BohrUniformAbcissa} {\sc Bohr, H.},
 {\it \"{U}ber die gleichm\"{a}{\ss}ige Konvergenz Dirichletscher Reihen}. J. Reine Angew. Math. 143,
203–211 (1913).

\bibitem{DefantMastilo} {\sc Defant, A. and Mastylo, M.}  {\it $ L^ p $-norms and Mahler's measure of polynomials on the $ n $-dimensional torus.} Constr. Approx.
DOI 10.1007/s00365-015-9319-x (2015).

\bibitem{Dursat} {\sc Dursat, P.}  {\it Autour de la fonction qui compte
le nombre de nombres premiers.} Th\`ese, Universit\'e de Limoges (1998).

\bibitem{HardySpaceDirich} {\sc Hedenmalm, H., Lindqvist, P., and Seip, K.}
{\it A Hilbert space of Dirichlet series and systems of dilated functions in $ L^ 2 (0, 1)$}. Duke Math. J. 86 , no. 1, 1–37 (1997).

\bibitem{Helson} {\sc Helson, H.}
{\it Hankel forms and sums of random variables.} Studia Math.,
176(1):85–92, 2006.

\bibitem{KonyaginQueffelec} {\sc Konyagin, S. V. and Queff\'{e}lec, H.}
{\it The Translation $1/2$ in the Theory of Dirichlet Series.} Real Analysis Exchange, 27(1), 155-176 (2001).

\bibitem{KwapienKonig} {\sc K\"{o}nig, H. and Kwapień, S.}
{\it Best Khintchine type inequalities for sums of independent, rotationally invariant random vectors}. Positivity, 5(2), 115-152 (2001).

\bibitem{Tenenbaum} {\sc Tenenbaum, G.},
{\it Introduction to analytic and probabilistic number theory}. Vol. 46 , Cambridge university press (1995).

\bibitem{Weissler} {\sc Weissler, F. B.},
{\it Logarithmic Sobolev inequalities and hypercontractive estimates on the circle.}
Journal of Functional Analysis, 37(2), 218-234 (1980).

\bibitem{Wigert} {\sc Wigert, S.}
{\it Sur l'ordre de grandeur du nombre des diviseurs d'un entier}. Arkiv. f\"{u}r Math. 3 (1907), 1–9.

\end{thebibliography}
\end{document}